\newtheorem{theorem}{Theorem}[section]
\newtheorem*{theorem*}{Theorem}
\newtheorem{proposition}[theorem]{Proposition}
\newtheorem{lemma}[theorem]{Lemma}
 \theoremstyle{definition}
\newtheorem{dfn}[theorem]{Definition}
\newcommand{\Z}{\mathbb{Z}}
\newcommand{\ki}{\mathfrak{i}}
\newcommand{\kj}{\mathfrak{j}}
\newcommand{\gam}{\gamma}
\newcommand{\CH}{\operatorname{CH}}
\newcommand{\Pt}{\mathcal{P}_2}
\newcommand{\sig}{\sigma}
\newcommand{\lra}{\longrightarrow}
\newcommand{\ra}{\rightarrow}
\newcommand{\ol}{\overline}
\newcommand{\lam}{\lambda}
\newcommand{\bs}{\backslash}
\newcommand{\Lam}{\Lambda}
\newcommand{\abs}[1]{\lvert#1\rvert}
\newcommand{\codim}{\operatorname{codim}}
\newcommand{\cdim}{\operatorname{cdim}}
\newcommand{\al}{\alpha}
\newcommand{\ds}{\oplus}
\newcommand{\mult}{\operatorname{mult}}
\newcommand{\de}{\delta}
\newcommand{\lt}{\leadsto}
\newcommand{\im}{\operatorname{im}}
\newcommand{\trans}{\operatorname{transpose}}
\newcommand{\Spec}{\operatorname{Spec}}
\renewcommand{\H}{\mathbb{H}}
\newcommand{\rat}{\dashrightarrow}
\renewcommand{\phi}{\varphi}
\newcommand{\pt}{\tilde{\phi}}
\newcommand{\Ch}{\operatorname{Ch}}
\newcommand{\subs}[2]{\genfrac{}{}{0pt}{}{#1}{#2}}
\title{Incompressibility of orthogonal Grassmannians of rank $2$}
\author{Bryant G. Mathews}
\date{}
\address{Department of Mathematics, University of California,
        Los Angeles, CA 90095-1555} \email {bmathews@math.ucla.edu}
\begin{document}
\begin{abstract}
For a nondegenerate quadratic form $\phi$ on a vector space $V$ of dimension $2n+1$, let $X_d$ be the variety of $d$-dimensional totally isotropic subspaces of $V$.  We give a sufficient condition for $X_2$ to be $2$-incompressible, generalizing in a natural way the known sufficient conditions for $X_1$ and $X_n$.  Key ingredients in the proof include the Chernousov-Merkurjev method of motivic decomposition as well as Pragacz and Ratajski's characterization of the Chow ring of $(X_2)_E$, where $E$ is a field extension splitting $\phi$.
\end{abstract}
\maketitle

\section{Preliminaries}

Before stating our results in the next section, we begin by recalling the notions of canonical $p$-dimension, $p$-incompressibility, and higher Witt index.

Let $X$ be a scheme over a field $F$, and let $p$ be a prime or zero.  A field extension $K$ of $F$ is called a \emph{splitting field of $X$} (or is said to \emph{split $X$}) if $X(K)\neq\emptyset$.  A splitting field $K$ is called \emph{$p$-generic} if, for any splitting field $L$ of $X$, there is an $F$-place $K\rightharpoonup L'$ for some finite extension $L'/L$ of degree prime to $p$.  In particular, $K$ is $0$-generic if for any splitting field $L$ there is an $F$-place $K\rightharpoonup L$.

The canonical $p$-dimension of a scheme $X$ over $F$ was originally defined \cite{BR05,KM06} as the minimal transcendence degree of a $p$-generic splitting field $K$ of $X$.  When $X$ is a smooth complete variety, the original algebraic definition is equivalent to the following geometric one \cite{KM06,M}.
\begin{dfn}\label{cddef}
Let $X$ be a smooth complete variety over $F$.  The \emph{canonical $p$-dimension $\cdim_p(X)$ of $X$} is the minimal dimension of the image of a morphism $X'\ra X$, where $X'$ is a variety over $F$ admitting a dominant morphism $X'\ra X$ with $F(X')/F(X)$ finite of degree prime to $p$.  The canonical $0$-dimension of $X$ is thus the minimal dimension of the image of a rational morphism $X\rat X$.
\end{dfn}
In the case $p=0$, we will drop the $p$ and speak simply of \emph{generic} splitting fields and canonical \emph{dimension} $\cdim(X)$.

For a third definition of canonical $p$-dimension as the essential $p$-dimension of the detection functor of a scheme $X$, we refer the reader to Merkurjev's comprehensive exposition \cite{M} of essential dimension.

For a smooth complete variety $X$, the inequalities $$\cdim_p(X)\leq \cdim (X)\leq \dim (X)$$ are clear from Definition \ref{cddef}.  Note also that if $X$ has a rational point, then $\cdim(X)=0$ (though the converse is not true).

\begin{dfn}
When a smooth complete variety $X$ has canonical $p$-dimension as large as possible, namely $\cdim_p(X)=\dim(X)$, we say that $X$ is \emph{$p$-incompressible}.
\end{dfn}

It follows immediately that if $X$ is $p$-incompressible, it is also \emph{incompressible} (i.e. $0$-incompressible). \\

We next recall the definitions of absolute and relative higher Witt indices, introduced by Knebusch in \cite{K76}.  Our discussion follows \cite[\S 90]{EKM08}.  The \emph{Witt index $\ki_0(\phi)$} of a quadratic form $\phi$ is the number of copies of the hyperbolic plane $\H$ which appear in the Witt decomposition of $\phi$.  Now let $\phi$ be a nondegenerate quadratic form over a field $F$ and set $F_0:=F$ and $\phi_0:=\phi_{an}$, the anisotropic part of $\phi$.  We proceed to recursively define $F_k:=F_{k-1}(\phi_{k-1})$, $\phi_k:=\left(\phi_{F_k}\right)_{an}$ for $k=1, 2, \ldots$, stopping at $F_h$ such that $\dim \phi_h\leq 1$.
\begin{dfn}
For $k\in\{0, 1, \ldots, h\}$, the \emph{$k$-th absolute higher Witt index $\kj_k(\phi)$ of $\phi$} is defined to be $\ki_0(\phi_{F_k})$.  For $k\in\{1,2,\ldots,h\}$, the \emph{$k$-th relative higher Witt index $\ki_k(\phi)$ of $\phi$} is defined to be the difference $$\ki_k(\phi):=\kj_k(\phi)-\kj_{k-1}(\phi).$$  The \emph{$0$-th relative higher Witt index of $\phi$} is the usual Witt index $\ki_0(\phi)$.
\end{dfn}

It follows from the definition that $$0\leq \kj_0(\phi)<\kj_1(\phi)<\cdots<\kj_h(\phi)=\left[(\dim\phi)/2\right].$$  Moreover, it can be shown that the set $\{\kj_0(\phi),\ldots,\kj_h(\phi)\}$ of absolute higher Witt indices of $\phi$ is equal to the set of all Witt indices $\ki_0(\phi_K)$ for $K$ an extension field of $F$. \\

\section{Introduction}
Let $\phi$ be a nondegenerate quadratic form on a vector space $V$ of dimension $2n+1$ over a field $F$.  Associated to $\phi$ there are smooth projective varieties $X_1,X_2,\linebreak[0]\ldots,\linebreak[0]X_n$, where $X_d$ is the variety of $d$-dimensional totally isotropic subspaces of $V$.  The variety $X_1$ is simply the projective quadric hypersurface associated to the quadratic form $\phi$.

We recall the following result proved in \cite{KM03} and also in \cite[Ch. XIV and \S 90]{EKM08}.

\begin{theorem}[Karpenko, Merkurjev]
If the quadric $X_1$ is anisotropic, then $$\cdim_2(X_1)=\cdim(X_1)=\dim(X_1)-\ki_1(\phi)+1.$$  In particular, $X_1$ is $2$-incompressible if and only if $\ki_1(\phi)=1$.
\end{theorem}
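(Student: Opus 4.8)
The plan is to bracket both quantities by proving the two inequalities $\cdim(X_1)\le \dim(X_1)-\ki_1(\phi)+1$ and $\cdim_2(X_1)\ge \dim(X_1)-\ki_1(\phi)+1$ separately; since $\cdim_2(X_1)\le\cdim(X_1)$ always holds, these force equality throughout, and the final sentence follows at once by setting $\ki_1(\phi)=1$. Write $Q:=X_1$ and $D:=\dim Q=2n-1$. Because $\phi$ is anisotropic we have $\kj_0(\phi)=0$, so $\ki_1(\phi)=\kj_1(\phi)=\ki_0(\phi_{F(Q)})$ is simply the Witt index of $\phi$ over the function field $E:=F(Q)$. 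The engine for both bounds is the cycle-theoretic reading of canonical $p$-dimension (see \cite{KM06,M}): for a smooth complete $X$ of dimension $D$, $\cdim_p(X)$ is the least value of $\dim\mathrm{pr}_2(\operatorname{supp}\rho)$ as $\rho$ ranges over cycles in $\CH_D(X\times X)$ defined over $F$ with $\mult(\rho)$ prime to $p$; the case $p=0$ recovers the minimal dimension of the image of a rational self-map, as in Definition \ref{cddef}.

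For the upper bound I would build such a correspondence explicitly. Over $E$ the form $\phi$ acquires Witt index $\ki_1(\phi)$, so $Q_E$ contains a totally isotropic projective-linear subspace of dimension $\ki_1(\phi)-1$; equivalently, the linear-subspace class $l_{\ki_1(\phi)-1}$ in $\CH_*(Q_{\ol{F}})$ becomes rational already over $E$. Because $E$ is the generic point of $Q$, spreading this datum out over $F$ produces a cycle $\rho\in\CH_D(Q\times Q)$ of multiplicity $1$ whose second projection collapses the $\ki_1(\phi)-1$ isotropic directions, hence is supported on a subvariety of dimension $D-(\ki_1(\phi)-1)=D-\ki_1(\phi)+1$. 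By the criterion above this yields $\cdim(Q)\le D-\ki_1(\phi)+1$, and \emph{a fortiori} the same bound for $\cdim_2(Q)$.

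For the lower bound I would show that no odd-multiplicity $F$-rational $\rho$ can do better. Suppose $\dim\mathrm{pr}_2(\operatorname{supp}\rho)=c<D-\ki_1(\phi)+1$. Passing to a splitting field $\ol{F}$ and expanding $\rho_{\ol{F}}$ in the standard basis of $\CH_*((Q\times Q)_{\ol{F}})$ built from the hyperplane powers $h^i$ and the linear-subspace classes $l_j$, the support condition confines the second factor of every component to a class of dimension $\le c$, while oddness of the multiplicity together with $F$-rationality of $\rho$ force the appearance of a component whose second factor is a linear-subspace class $l_j$ with $j\ge\ki_1(\phi)$ rational over $E$. This contradicts the defining property of the first Witt index, namely that $l_{\ki_1(\phi)}$ fails to be rational over $E$. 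The crux — and the step I expect to be hardest — is precisely this rationality dichotomy: that among the $l_j$ exactly $l_0,\dots,l_{\ki_1(\phi)-1}$ descend to $F(Q)$, and that an $F$-rational correspondence on $Q\times Q$ cannot smuggle in a higher one. Establishing it requires Karpenko's structural results on the Chow groups of quadrics and their self-products, and on the behaviour of cycles modulo $2$ under passage to the function field (the same machinery later deployed for $X_2$ in this paper); these replace the naive index-counting above with genuine control over which integral combinations of basis cycles are defined over $F$. Granting this, $c\ge D-\ki_1(\phi)+1$, completing the lower bound and hence the theorem.
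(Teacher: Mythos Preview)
The paper does not prove this theorem at all: it is stated in the Introduction as a known background result, with the proof attributed to \cite{KM03} and \cite[Ch.~XIV and \S 90]{EKM08}. There is therefore no ``paper's own proof'' to compare your proposal against.

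That said, your sketch is broadly aligned with the argument in those references. The upper bound via spreading out a maximal linear subspace of $Q_{F(Q)}$ into a multiplicity-one correspondence is exactly the standard construction. For the lower bound, your instinct is right that the crux is the rationality dichotomy for the classes $l_j$ over $F(Q)$, and you are also right that this is the hard part; in \cite{EKM08} it is ultimately supplied by Karpenko's theorem on the first Witt index (outer excellent connections), which controls which $l_j$ can appear in a rational cycle on $Q\times Q$. Your phrasing ``force the appearance of a component whose second factor is a linear-subspace class $l_j$ with $j\ge\ki_1(\phi)$'' is the correct shape of the contradiction, but as written it is not yet an argument: oddness of the multiplicity together with $F$-rationality does not by itself force such a term without invoking the structural results you allude to. So the proposal is an accurate outline of the known proof, with the genuine content deferred to the cited machinery, but it is not a self-contained proof and the paper under review makes no attempt to supply one.
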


At the other extreme is the variety $X_n$ of maximal totally isotropic subspaces of $V$.  In \cite[Ch. XVI]{EKM08}, building on a result of Vishik from \cite{V05}, the canonical 2-dimension of $X_n$ is computed in terms of the $J$-invariant of $\phi$.  The following result is a corollary.

\begin{theorem}[Karpenko, Merkurjev]
If $\deg\CH(X_n)=2^n\Z$, then $X_n$ is $2$-incom\-pres\-sible.
\end{theorem}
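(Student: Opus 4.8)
The plan is to deduce the statement from the computation of $\cdim_2(X_n)$ in terms of the $J$-invariant referenced above; the real content is then the translation of the hypothesis $\deg\CH(X_n)=2^n\Z$ into the language of that computation. First I would recall the structure of $\CH((X_n)_E)$ for a splitting field $E$. Modulo $2$ it is an exterior algebra on special Schubert generators $e_1,\dots,e_n$ with $e_i$ of codimension $i$, so that the class of a rational point is the top product $e_1e_2\cdots e_n$, of total codimension $1+2+\cdots+n=\dim X_n$. The $J$-invariant records which of the generators $e_i$ lie in the image of the restriction $\CH(X_n)\to\CH((X_n)_E)$ --- the \emph{rational} generators --- and the cited computation expresses $\cdim_2(X_n)$ as $\dim X_n$ minus the sum of the codimensions of the rational generators.

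Next I would identify $\deg\CH(X_n)$ geometrically. A closed point of $X_n$ of degree $d$ is an $n$-dimensional totally isotropic subspace of $\phi$ defined over an extension of degree $d$, so the positive generator of $\deg\CH(X_n)$ is the index of $X_n$, the gcd of the degrees of its closed points; splitting $\phi$ one isotropic vector at a time shows this index always divides $2^n$. The crux is to prove that the index attains its maximum $2^n$ exactly when none of the $e_i$ is rational, i.e.\ when the $J$-invariant is as large as possible. The forward implication is the substantive one: if some $e_k$ were rational, then the multiplicative structure of $\CH((X_n)_E)$, together with its integral presentation due to Pragacz and Ratajski, should produce a rational zero-cycle whose degree properly divides $2^n$, contradicting maximality of the index.

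Granting this, the hypothesis forces the $J$-invariant to be maximal, so there are no rational generators, the subtracted sum of codimensions is $0$, and the cited formula gives $\cdim_2(X_n)=\dim X_n$; by definition $X_n$ is then $2$-incompressible.

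I expect the middle step to be the main obstacle. The index is a single top-codimensional invariant, whereas the $J$-invariant encodes rationality in every codimension, so maximality of the index must be leveraged to rule out rational generators in \emph{all} degrees and not merely to show that the point class itself is irrational. Tracking how a single low-codimension rational generator propagates, under multiplication, to a zero-cycle of strictly smaller degree is the delicate point, and it is precisely here that the explicit Pragacz--Ratajski description of the integral Chow ring is needed.
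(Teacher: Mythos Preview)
The paper does not give its own proof of this statement; it is quoted in the introduction as a known corollary of the computation of $\cdim_2(X_n)$ in terms of the $J$-invariant, with a reference to \cite[Ch.~XVI]{EKM08} (building on \cite{V05}). Your plan --- deduce incompressibility from that formula after translating the hypothesis $\deg\CH(X_n)=2^n\Z$ into the statement that the $J$-invariant is maximal --- is exactly the deduction the citation points to, and your description of the mod-$2$ Chow ring of $\bar X_n$ and of the formula $\cdim_2(X_n)=\dim X_n-\sum_{e_j\text{ rational}} j$ is accurate.

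Two small remarks. First, the ``crux'' you isolate is dealt with in the cited references in a fairly structured way: rationality of a single $e_k$ forces, via the multiplicative structure, the point class to lie in $2\cdot\ol{\CH}_0(X_n)$, so the index drops by a factor of $2$ for each rational generator; this is sharper than the one-directional propagation you sketch, though your version would also work. Second, the integral description of $\CH(\bar X_n)$ needed here is not due to Pragacz--Ratajski --- their paper, used elsewhere in this article, is invoked precisely because the case $X_2$ is more delicate; for the maximal isotropic Grassmannian the structure is classical and is handled directly in \cite{V05} and \cite[Ch.~XVI]{EKM08}.
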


To compute the canonical $2$-dimension of a general $X_d$ appears to be difficult because of the complexity of the Chow ring when $d\notin \{1,n\}$.  In this paper, we complete a small piece of this general program by determining a sufficient condition for the variety $X_2$ to be $2$-incompressible.  We assume everywhere that $n\geq 3$, the $n=2$ case having already been dealt with.

\begin{theorem}\label{main}
If $\deg\CH(X_2)=4\Z$ and $\ki_2(\phi)=1$, then $X_2$ is $2$-in\-com\-pres\-sible.  In particular, $$\cdim_2(X_2)=\cdim(X_2)=\dim(X_2)=4n-5.$$
\end{theorem}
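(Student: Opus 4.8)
The plan is to compute the \emph{upper $2$-motive} of $X_2$ and show it has full dimension. Fix a splitting field $E$ of $\phi$; over $E$ the variety $(X_2)_E$ is the split orthogonal Grassmannian, which is cellular, so $M(X_2)$ is geometrically split and, being projective homogeneous, satisfies Rost's nilpotence principle. The Krull--Schmidt theorem (Chernousov--Merkurjev) then decomposes $M(X_2;\F_2)$ into indecomposables, uniquely up to isomorphism and Tate twist, with a distinguished \emph{upper} summand $U$: the indecomposable direct summand whose geometric realization contains the unit class $1\in\overline{\CH}^0((X_2)_E)$. By the theory of canonical dimension via upper motives \cite{M}, $\cdim_2(X_2)=\dim U$, so it suffices to prove $\dim U=\dim X_2=4n-5$. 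Geometrically this means $U$ must stretch from the unit Tate motive $\F_2(0)$ all the way to the point Tate motive $\F_2(4n-5)$, with its two extremal ``shells'' of Tate motives each reduced to a single summand.

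\textbf{Preliminary reductions.} First I would show the hypotheses force $\phi$ to be anisotropic with $\ki_1(\phi)=1$. An isotropic line over $F$ would produce a sub-quadric inside $X_2$ (the isotropic $2$-planes through that line) carrying a closed point of degree $2$, contradicting $\deg\CH(X_2)=4\Z$; hence $\phi$ is anisotropic. If $\ki_1(\phi)\ge2$, then over $F(X_1)=F(\phi)$ the form already has Witt index $\ge2$, so $X_2$ acquires a rational point there, giving a rational map $X_1\rat X_2$; composing with the always-available $X_2\rat X_1$ yields a self-correspondence of $X_2$ factoring through a variety of dimension $2n-1<4n-5$, so $\cdim(X_2)\le 2n-1$ and $X_2$ is compressible. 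Thus only $\ki_1(\phi)=1$ is consistent with the conclusion. Then $\phi_{F_1}=\H\perp\phi_1$ with $\phi_1$ anisotropic of dimension $2n-1$, and a short computation with the higher Witt indices gives $\ki_2(\phi)=\ki_1(\phi_1)$. Hence the hypothesis $\ki_2(\phi)=1$ says precisely that the quadric $Q_1:=X_1(\phi_1)$ over $F_1$ is $2$-incompressible, by the Karpenko--Merkurjev theorem for $X_1$.

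\textbf{Main construction.} I would make the decomposition explicit using Pragacz and Ratajski's description of $\overline{\CH}(X_2)=\CH((X_2)_E)$ via their $\tilde Q$-polynomials, which supplies a Schubert-type basis together with its multiplicative structure, so that $\overline{\CH}(X_2\times X_2)=\overline{\CH}(X_2)\otimes\overline{\CH}(X_2)$. The indecomposable summands correspond to the \emph{rational} idempotents, i.e. to idempotents in the image of restriction $\CH(X_2\times X_2;\F_2)\to\overline{\CH}(X_2\times X_2;\F_2)$, and determining which Tate motives cluster into $U$ amounts to analyzing this image. Here the two hypotheses enter from opposite ends. The degree condition $\deg\CH(X_2)=4\Z$ obstructs the splitting-off of the top Tate motives: it makes the point class non-rational modulo $2$ and, through the exact value $4=2^2$, controls the next layer as well, so $\F_2(4n-5)$ cannot detach from $U$. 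The incompressibility of $Q_1$ coming from $\ki_2=1$ controls the low-codimension end: passing to $F_1$ one gets a decomposition of $(X_2)_{F_1}$ in which $M(Q_1)$ appears as a building block, and the indecomposability of the upper motive of $Q_1$ prevents the bottom shell of $U$ from absorbing extra classes such as $\F_2(1)$. Combining the two, $U$ contains both $\F_2(0)$ and $\F_2(4n-5)$ with trivial extremal shells, whence $\dim U=4n-5$ and the asserted equalities $\cdim_2(X_2)=\cdim(X_2)=\dim(X_2)=4n-5$ follow.

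\textbf{Main obstacle.} The crux is the explicit rationality analysis: pinning down the image of restriction in $\overline{\CH}(X_2\times X_2;\F_2)$ well enough to exclude \emph{every} proper rational idempotent that would split $U$. For $X_1$ the Chow ring is essentially a single line and for $X_n$ it is governed by the $J$-invariant, but for $X_2$ the Pragacz--Ratajski ring is a genuinely two-parameter array of Schubert classes with intricate products, so the bookkeeping of which products of rational low-codimension cycles can realize a given high-codimension cycle is delicate. I expect the hardest point to be fusing the two inputs — a $0$-cycle/degree condition at the top and a higher-Witt-index (quadric-incompressibility) condition at the bottom — into a single statement that the diagonal projector is $2$-indecomposable, which is where the reduction to $Q_1$ over $F(X_1)$ and a careful comparison of restriction maps must do the real work.
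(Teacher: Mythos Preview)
Your framework via upper motives is a legitimate alternative to the paper's multiplicity-congruence criterion $\mult(\delta)\equiv\mult(\delta^t)\pmod 2$, and both reduce to the same rationality questions on $\overline{\CH}(X_2\times X_2)$. But there is a genuine gap at precisely the point you flag as the ``main obstacle,'' and the paper's proof shows that the missing ingredient is not mere bookkeeping.

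The paper decomposes $M(X_2\times X_2)$ (not $M(X_2)$) via Chernousov--Merkurjev into twisted motives of the flag varieties $X_2$, $X_{1,2}$, $X_{1,2,3}$, $X_{2,4}$, and checks the congruence on each summand. The summand corresponding to your ``next layer controlled by $4=2^2$'' is $M(X_{1,2})(2n-3)\simeq M(X_2)(2n-3)\oplus M(X_2)(2n-2)$, and handling it requires a lemma you do not have: for $r\in\{2n-3,2n-2\}$, \emph{every} class $\gamma\in\CH^r(\bar X_2)$ satisfies $2\gamma\in\overline{\CH}^r(X_2)$. The proof is a specific comparison of the Pragacz--Ratajski Pieri rules for the odd orthogonal Grassmannian $\bar X_2=X_B$ with those for the Lagrangian Grassmannian $X_C$ of the same rank: the basic cycles are indexed by the same set of shapes, the special classes $\sigma_1,\sigma_2$ are rational (Chern classes of the tautological bundle), $\tau_1,\tau_2$ generate $\CH(X_C)$ integrally, and the multiplicities in the two Pieri formulas differ by exactly $2^{l(\lambda^b)}$, where $l(\lambda^b)$ is forced to equal $1$ precisely in weights $2n-3$ and $2n-2$. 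Transporting an integral expression for $\tau(\lambda)$ in $\tau_1,\tau_2$ over to $\sigma_1,\sigma_2$ therefore produces $2\sigma(\lambda)$. This type-$B$/type-$C$ comparison is the mechanism that converts $\deg\CH(X_2)=4\Z$ (as opposed to merely $2\Z$) into control of the middle codimensions; your proposal gestures at the phenomenon but supplies no such mechanism.

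Two smaller points. First, the paper handles the remaining summands (those built from $X_{1,2,3}$ and $X_{2,4}$) by passing to $F(X_2)$, not to $F_1=F(\phi)$: it shows $\ki_0(\phi_{F(X_2)})=2$ exactly, whence by Springer's theorem no flag variety with $d_s>2$ carries an odd-degree zero-cycle over $F(X_2)$. Your route through the quadric $Q_1$ over $F_1$ is plausible but would require a comparison of upper motives under the base change $F\to F_1$ that you have not set up. Second, your derivation of $\ki_1(\phi)=1$ is circular: you argue that $\ki_1\ge 2$ forces $X_2$ to be compressible, which is the negation of the \emph{conclusion}, not a contradiction with the \emph{hypotheses}. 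The paper instead shows directly that $\deg\CH(X_2)=4\Z$ implies $\kj_1(\phi)=1$, by producing a quadratic extension over which $\phi$ becomes isotropic and invoking the characterization of the possible Witt indices.
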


This result concerning $X_2$ is a natural generalization of what is already known about $X_1$ and $X_n$.  To see this, note that $X_1$ being anisotropic means that $\deg\CH(X_1)\linebreak[0]=\linebreak[0]2\Z$.  Furthermore, \linebreak[0]$\deg\linebreak[0]\CH(X_n)\linebreak[0]=2^n\Z$ implies that $\kj_{n-1}(\phi)=n-1$, from which it immediately follows that $\ki_n(\phi)=1$.  One might then conjecture, for general $d$, that $$\deg\CH(X_d)=2^d\Z,\quad \ki_d(\phi)=1$$ are sufficient conditions for $X_d$ to be $2$-incompressible.

\section{Higher Witt indices}

In this section we collect two results concerning higher Witt indices which will be needed later.

\begin{proposition}\label{witt1}
If $\deg\CH(X_2)=4\Z$, then $\kj_1(\phi)=1$.
\end{proposition}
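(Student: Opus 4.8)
The plan is to compare the index of $X_2$ (the positive generator of $\deg\CH(X_2)$, here equal to $4$) with the first two absolute higher Witt indices by passing through the incidence variety of isotropic flags. Write $\ind(Y)$ for the index of a smooth complete variety $Y$, i.e. the positive generator of $\deg\CH(Y)$. I would introduce the smooth projective geometrically integral variety
$$I=\{(L_1,L_2):L_1\subset L_2,\ \dim L_1=1,\ \dim L_2=2,\ L_2\text{ totally isotropic}\},$$
together with its projections $p\colon I\to X_1$, $(L_1,L_2)\mapsto L_1$, and $q\colon I\to X_2$, $(L_1,L_2)\mapsto L_2$. The fibre of $q$ over a point with $2$-plane $L_2$ is the split projective line $\P(L_2)$, and the fibre of $p$ over a point with line $L_1$ is the quadric of the form induced by $\phi$ on $L_1^{\perp}/L_1$, of dimension $2n-3\geq 3$ (here $n\geq 3$ is used). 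The whole argument then amounts to tracking the index through these two fibrations.

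First I would record two facts about the index of a smooth projective variety. (i) It is unchanged along a fibration whose fibres are split projective lines: since $q$ is surjective, every closed point $x\in X_2$ with residue field $K$ lifts (via the $K$-rational point of $\P(L_2)=\P^1_K$) to a point of $I$ of the same degree, giving $\ind(I)\mid\ind(X_2)$, while the reverse divisibility is immediate from $q$ being a morphism; hence $\ind(I)=\ind(X_2)=4$. (ii) For a dominant morphism $f\colon Y\to X$ of smooth projective varieties, $\ind(Y)$ divides $\ind(X)\cdot\ind(Y_{F(X)})$, where $Y_{F(X)}$ is the generic fibre. Applying (ii) to $p$ yields
$$\ind(I)\ \mid\ \ind(X_1)\cdot\ind\big((Q_\psi)_{F(X_1)}\big),$$
where $Q_\psi$ denotes the generic fibre of $p$, the quadric of the residual form $\psi$ on $L_1^{\perp}/L_1$ over $F(X_1)$.

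Now both right-hand factors are at most $2$: the index of a quadric is $1$ or $2$, and $Q_\psi$ is a positive-dimensional quadric. Since the left-hand side is $4$, both factors must equal $2$. From $\ind(X_1)=2$ I conclude that $X_1$ has no rational point, i.e. $\phi$ is anisotropic, so $\kj_0(\phi)=0$; in particular $F_1=F(\phi_{an})=F(X_1)$. From $\ind\big((Q_\psi)_{F_1}\big)=2$ I conclude that $\psi$ is anisotropic over $F_1$. As $\phi_{F_1}\cong\H\perp\psi$ (the generic point of $X_1$ supplies one isotropic line), this gives $\kj_1(\phi)=\ki_0(\phi_{F_1})=1+\ki_0(\psi)=1$, as claimed.

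The main obstacle is establishing fact (ii) rigorously: one spreads out a closed point of degree $\ind(Y_{F(X)})$ on the generic fibre to a multisection $W\subseteq Y$, finite of that degree over a dense open $U\subseteq X$, and pairs it against a zero-cycle of degree $\ind(X)$ on the base. The delicate point is that $W$ is defined only over $U$, so one needs that the greatest common divisor of the degrees of closed points lying in $U$ already equals $\ind(X)$; this is a standard property of the index, provable by moving zero-cycles along curves meeting $U$, but it is where I would take the most care. Fact (i) and the bounds on the quadric indices are routine by comparison.
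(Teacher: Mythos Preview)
Your argument is correct, and it takes a genuinely different route from the paper's proof. The paper argues by contradiction: it chooses a quadratic extension $K/F$ over which $\phi_{an}$ becomes isotropic, invokes \cite[Prop.~25.1]{EKM08} to conclude $\ki_0(\phi_K)\geq\kj_1(\phi)$, and observes that $\kj_1(\phi)\geq 2$ would then produce a $K$-point on $X_2$, contradicting $\deg\CH(X_2)=4\Z$. Your approach instead computes indices through the incidence correspondence $X_{1,2}$: the $\mathbb{P}^1$-bundle $q$ transfers the hypothesis to $\ind(X_{1,2})=4$, and the multiplicativity bound along $p$ forces both $\ind(X_1)$ and the index of the generic residual quadric to equal $2$, which reads off $\kj_0(\phi)=0$ and $\kj_1(\phi)=1$ directly.

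The paper's proof is shorter and uses only a single quadratic extension, but it leans on the cited fact that the splitting pattern is realized over every isotropy-increasing extension. Your proof trades that input for Chow's moving lemma (to justify that zero-cycles of degree $\ind(X_1)$ can be moved into a dense open), and in return gives a more structural picture: the two obstructions $\kj_0=0$ and $\kj_1=1$ appear as the two factors of $4$ along the tower $X_{1,2}\to X_1$. Your identification of the moving-lemma step as the one place needing care is accurate; once that is granted, the rest is routine.
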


\begin{proof}
Let $K$ be a field of degree $2$ over $F$ such that the anisotropic part of $\phi$ has a rational point over $K$.  Then $\ki_0(\phi_K)>\ki_0(\phi)$.  By \cite[Prop. 25.1]{EKM08}, it follows that $\ki_0(\phi_K)\geq \kj_1(\phi)$.  If $\kj_1(\phi)\geq 2$ then so is $\ki_0(\phi_K)$, which implies that the variety $X_2$ has a rational point over $K$.  Since $K$ has degree $2$ over $F$, this contradicts the assumption.  Thus $\kj_1(\phi)=1$ and $\kj_0(\phi)=0$.
\end{proof}

From this proposition, we see that the hypothesis of our Theorem \ref{main} implies $$\kj_2(\phi)=\kj_1(\phi)+\ki_2(\phi)=1+1=2.$$

\begin{proposition}\label{witt2}
If $\kj_2(\phi)=2$, then $\ki_0(\phi_{F(X_2)})=2$.
\end{proposition}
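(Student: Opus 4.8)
The plan is to sandwich $\ki_0(\phi_{F(X_2)})$ between $2$ and $2$. For the lower bound, observe that the generic point of the integral variety $X_2$ furnishes a rational point of $X_2$ over its own function field, so $X_2(F(X_2))\neq\emptyset$; since an $L$-rational $2$-dimensional totally isotropic subspace exists exactly when $\ki_0(\phi_L)\geq 2$, this gives $\ki_0(\phi_{F(X_2)})\geq 2$. It remains to prove the reverse inequality $\ki_0(\phi_{F(X_2)})\leq 2$, and for this I would use two standard facts about places. First, over a splitting field of $\phi$ the variety $X_2$ becomes the split orthogonal Grassmannian of isotropic $2$-planes in a $(2n+1)$-dimensional space, which is projective homogeneous and hence irreducible; thus $X_2$ is geometrically integral, and its function field $F(X_2)$ is a generic splitting field of $X_2$ in the sense of Section~1. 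Concretely, for any $L/F$ with $X_2(L)\neq\emptyset$, an $L$-point of $X_2$ gives a local homomorphism $\cO_{X_2,P}\to L$ out of a local ring with fraction field $F(X_2)$, and Chevalley's place-extension theorem promotes this to an $F$-place $F(X_2)\rightharpoonup L$. Second, the Witt index does not decrease along a place: if $K\rightharpoonup L$ is an $F$-place then $\ki_0(\phi_K)\leq\ki_0(\phi_L)$, because a totally isotropic subspace defined over $K$ specializes—via the valuative criterion applied to the proper variety of isotropic subspaces—to one defined over $L$.

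To finish, I would apply both facts to $L=F_2$, the second field in the generic splitting tower of $\phi$. By the very definition of the absolute higher Witt index, $\ki_0(\phi_{F_2})=\kj_2(\phi)=2$, so $X_2(F_2)\neq\emptyset$ and $F_2$ splits $X_2$. Genericity of $F(X_2)$ then yields an $F$-place $F(X_2)\rightharpoonup F_2$, and monotonicity of the Witt index along this place gives $\ki_0(\phi_{F(X_2)})\leq\ki_0(\phi_{F_2})=2$. Together with the lower bound, this proves $\ki_0(\phi_{F(X_2)})=2$. Note that only the hypothesis $\kj_2(\phi)=2$ is used, entering through the identity $\ki_0(\phi_{F_2})=\kj_2(\phi)$; it is not necessary to unwind $\kj_0$ and $\kj_1$ separately.

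The two-sided bound is routine once the tools are assembled; the step requiring genuine care—and the main obstacle—is the justification that $F(X_2)$ is a generic splitting field, i.e.\ the construction of the place $F(X_2)\rightharpoonup F_2$. This rests on the geometric integrality of $X_2$ together with the place-extension theorem. One must also be attentive to the \emph{direction} of the monotonicity inequality: places can only raise the Witt index, and it is precisely this direction that converts the splitting property of $F_2$ into the upper bound over $F(X_2)$. An alternative, more hands-on route would bypass the abstract genericity framework and directly exhibit a valuation ring of $F(X_2)$ dominating the local ring of an $F_2$-point of $X_2$, but the argument via generic splitting fields seems cleanest and fits the setup recalled in Section~1.
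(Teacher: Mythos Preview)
Your proof is correct and takes a genuinely different route from the paper's. For the upper bound, the paper argues by contradiction: assuming $\ki_0(\phi_{F(X_2)})\geq 3$, it passes to $F'=F(\phi)$, introduces the quadric $Y_1$ and the Grassmannian $Y_2$ attached to the anisotropic part $\tilde{\phi}$ of $\phi_{F'}$, and manufactures rational morphisms $Y_1\rat (X_2)_{F'}\rat Y_2$; a lemma of Brosnan--Reichstein--Vistoli on composing rational maps through a smooth variety then yields $Y_1\rat Y_2$, contradicting $\kj_1(\tilde{\phi})=1$. Your argument instead invokes the $0$-genericity of $F(X_2)$ among splitting fields of $X_2$ together with the monotonicity of the Witt index along places, applied to the target $F_2$. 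The paper's approach is more hands-on at the level of quadratic-form manipulations and explicitly uses $\kj_1(\phi)=1$; yours packages the geometry into two standard general facts and, as you observe, never unwinds the lower $\kj_k$. In particular, your argument immediately gives the generalization stated in the paper's remark after the proposition---that it suffices for \emph{some} absolute higher Witt index to equal $2$---since all you actually use is the existence of an extension $L/F$ with $\ki_0(\phi_L)=2$.
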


In fact, one need only assume that {\em some} absolute higher Witt index of $\phi$ is equal to $2$ (possibly $\kj_0(\phi)$ or $\kj_1(\phi)$), but we don't need this generality for our purposes.

\begin{proof}
The variety $X_2$ has a rational point over $F(X_2)$, so $\ki_0(\phi_{F(X_2)})\geq 2$.

We prove that $\ki_0(\phi_{F(X_2)})\leq 2$ by contradiction.  If $\pt$ is the anisotropic part of $\phi_{F(\phi)}$, then $\phi_{F(\phi)}\simeq \H\perp\pt$, since our assumption $\kj_2(\phi)=2$ implies that $\kj_1(\phi)=1$.  We define two varieties over $F':=F(\phi)$.   Let $Y_1$ be the projective quadric corresponding to $\pt$, and let $Y_2$ be the variety of totally isotropic subspaces of dimension $2$ with respect to $\pt$.  Since $$\ki_0(\phi_{F'(Y_1)})= \ki_0(\phi_{F(\phi)(\pt)})=\kj_2(\phi)=2,$$ the variety ${X_2}$ has a rational point over $F'(Y_1)$.  If $\ki_0(\phi_{F(X_2)})\geq 3$, then $\ki_0\left(\pt_{F'\left((X_2)_{F'}\right)}\right)\linebreak[0]\geq\linebreak[0]2$, so $Y_2$ has a rational point over $F'\left((X_2)_{F'}\right)$.  We thus have two rational morphisms between varieties over $F'$:
$$Y_1\rat (X_2)_{F'}\rat Y_2.$$  By \cite[Lem. 6.1]{BRV}, since $X_2$ is smooth and $Y_2$ is complete, there exists a rational morphism from $Y_1$ to $Y_2$, i.e. $(Y_2)_{F'(Y_1)}$ has a rational point.  But this is a contradiction, since $\kj_1(\pt)=\kj_2(\phi)-1=1$.
\end{proof}

\section{Shapes and multiplicities}

A quadratic form $\phi$ is \emph{split} if $\ki_0(\phi)=\left[(\dim\phi)/2\right]$, the greatest possible value.  Given a quadratic form $\phi$ over $F$, we fix an extension field $E/F$ such that $\phi_E$ is split and define $\bar{\phi}:=\phi_E$ and $\bar{X}_d:=(X_d)_E$.  Let $\ol{\CH}(X_d)$, called the {\em reduced Chow group}, denote the image of the change of field homomorphism $\CH(X_d)\ra\CH(\bar{X}_d)$.  Elements of $\ol{\CH}(X_d)$ will be called {\em rational} cycles.

In this section we prove a technical lemma, based on a characterization given by Pragacz and Ratajski in \cite{PR96} of the Chow ring of the variety $\bar{X}_2$.

\begin{lemma}\label{tech}
If $\gam \in \CH^r(\bar{X}_2)$ with $r\in\{2n-3,2n-2\}$, then $2\gam$ is a rational cycle.
\end{lemma}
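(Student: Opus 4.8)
The plan is to work inside the Chow ring $\CH(\bar{X}_2)$ as described by Pragacz and Ratajski, exploiting its Schubert basis indexed by the shapes of this section together with the list of visibly rational classes. First I would record which cycles come for free: the tautological subbundle $S\subset V\tens\cO_{\bar{X}_2}$, its orthogonal $S^{\perp}$, the quotient $(V\tens\cO)/S$, and the quadratic bundle $S^{\perp}/S$ are all defined over $F$, so their Chern classes pull back to rational cycles on $\bar{X}_2$. Hence the subring $R\subseteq\ol{\CH}(\bar{X}_2)$ generated by the special Schubert classes (those realized as these Chern classes) consists entirely of rational cycles. Since $\ol{\CH}(\bar{X}_2)$ is a subring, the lemma reduces to the containment $2\,\CH^{r}(\bar{X}_2)\subseteq R$ for $r\in\{2n-3,2n-2\}$.

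To establish this I would argue basis element by basis element. Using the shape combinatorics I would list the Schubert classes spanning $\CH^{r}(\bar{X}_2)$ in the two codimensions $r=2n-3$ and $r=2n-2$, which straddle the middle dimension $\tfrac12(4n-5)$. A shape of this type is rational whenever it already lies in $R$; the remaining classes are those of ``maximal isotropic'' type, the $X_2$-analogues of the linear classes $l_j$ on the quadric $X_1$, for which $l_j$ itself need not be rational but $2\,l_j=h^{2n-1-j}$ is. For each such $\gam$ the goal is to realize $2\gam$ as a $\Z$-combination of products of special classes: applying the Pragacz--Ratajski Pieri rule to a suitable product of special (hence rational) generators expands in the Schubert basis with $\gam$ occurring with an even structure constant, while all other terms already belong to $R$. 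The evenness of these Pieri multiplicities --- the orthogonal analogue of the factor of $2$ attached to the Pfaffian classes --- is exactly the multiplicity data this section is built to supply.

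The main obstacle is the combinatorial bookkeeping: pinning down precisely which shapes in codimension $2n-3$ and $2n-2$ lie outside $R$, and verifying that each occurs with an even coefficient in an appropriately chosen product of special classes once the already-rational contributions are cleared. I expect codimension $2n-2$ to be the harder of the two, since it is the larger and carries more shapes, so that isolating a single bad class may require combining several Pieri products and cancelling the rational terms by hand. Controlling the boundary behavior of the shapes --- where an isotropic index meets the cutoff that separates the orthogonal Grassmannian from the ordinary one --- is the delicate point, and I would organize the computation so that both codimensions reduce to one parity statement about the Pieri multiplicities of the relevant shapes.
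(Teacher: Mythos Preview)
Your setup is right: the special classes $\sigma_1,\sigma_2$ are Chern classes of the tautological bundle and hence rational, so the subring $R$ they generate sits inside $\ol{\CH}(\bar X_2)$, and the lemma amounts to $2\,\CH^r(\bar X_2)\subseteq R$ for $r\in\{2n-3,2n-2\}$. But your proposed mechanism for proving this containment does not work as stated. You plan to separate the Schubert basis in codimension $r$ into classes already in $R$ and ``bad'' classes, then for each bad $\gamma$ exhibit a product of special classes in which $\gamma$ appears with even coefficient and the remaining terms are already rational. The problem is that in these two codimensions there is no such dichotomy: the shape analysis shows that \emph{every} shape $\lambda$ of weight $2n-3$ or $2n-2$ has $l(\lambda^b)=1$, so when you expand any monomial $\sigma_1^{a_1}\sigma_2^{a_2}$ of the right weight, \emph{all} the Schubert classes appearing are of the same kind. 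You cannot clear the ``other terms'' as lying in $R$, because they are exactly the classes you are still trying to handle. Your plan degenerates into an uncontrolled linear-algebra problem over the whole Schubert basis in codimension $r$.

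The missing idea is to bring in the \emph{symplectic} isotropic Grassmannian $X_C$. Its Chow ring has the same shape-indexed basis and nearly the same Pieri rule, but with the crucial difference that there $\tau_1,\tau_2$ generate the ring \emph{integrally}. So for any shape $\lambda$ one can write $\tau(\lambda)=\sum_j u_j\,\tau_1^{r-2j}\tau_2^{j}$ with $u_j\in\Z$; this solves for free the linear-algebra problem you were facing. Transporting the \emph{same} polynomial to type $B$ and setting $\gamma'=\sum_j u_j\,\sigma_1^{r-2j}\sigma_2^{j}\in R$, one compares the two Pieri expansions term by term: for every compatible chain $\Lambda$ ending at a shape with $l(\lambda^b)=1$, the type-$B$ exponent exceeds the type-$C$ exponent by exactly $l(\lambda^b)=1$. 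Hence $\gamma'=2\sigma(\lambda)$ on the nose. The uniform fact $l(\lambda^b)=1$ in these codimensions is what replaces your hoped-for ``even Pieri multiplicity'' by an exact global factor of $2$, and the detour through $X_C$ is what tells you which polynomial to use.
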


We begin by fixing notation and recalling some definitions.  Let $X_B:=\bar{X}_2,$ the variety of 2-dimensional isotropic subspaces of $V$ with respect to the {\em split} nondegenerate quadratic form $\bar{\phi}$.  Recall that $V$ has dimension $2n+1$.  Let $X_C$ denote the variety of 2-dimensional isotropic subspaces of a vector space $W$ of dimension $2n$ with respect to a nondegenerate alternating form $\psi$ on $W$.

The next group of definitions are adapted for our purposes from Macdonald's \cite{M79}.

\begin{dfn} A \emph{partition} is a finite, strictly decreasing sequence of positive integers
$$\lam^*=(\lam^*_1,\lam^*_2,\ldots,\lam^*_s).$$
\end{dfn}

The $*$ in our notation will take on the value $t$ for ``top'' or $b$ for ``bottom,'' depending on the role the partition plays in a shape, defined below.  The \emph{length} $l(\lam^*)$ of the partition above is just $s$, while the \emph{weight} of the partition is defined to be $|\lam^*|:=\sum_{k=1}^s \lam^*_k$.  The empty partition, denoted $\emptyset$, is the sequence with no terms.

Partitions are visualized as diagrams of boxes.  The \emph{diagram} $D^*_{\lam}$ of a partition $\lam^*$ has $\lam^*_k$ boxes in the $k$th row, beginning with the top row.  For example, the partition $(4,3,1)$ has diagram:
\begin{center}
\includegraphics[scale=0.75]{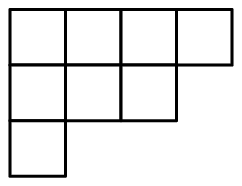}
\end{center}
Note that the length of a partition is just the number of rows in its diagram, while the weight is just the number of boxes.

A \emph{skew diagram} $D^*_\mu \backslash D^*_\lam$ is obtained by removing the boxes in the intersection of $D^*_\mu$ and $D^*_\lam$ from $D^*_\mu$.  For example, the skew diagram $(4,3,1)\backslash (2,1)$ is as follows:
\begin{center}
\includegraphics[scale=0.75]{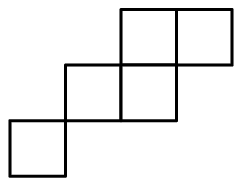}
\end{center}

The remaining definitions are adapted from \cite{PR96}.

\begin{dfn}
With $n$ fixed as above, a pair $\lam=(\lam^t//\lam^b)$ of partitions $\lam^t$ and $\lam^b$ is called a \emph{shape} if $\lam^t_1,\lam^b_1\leq n$, $l(\lam^t)\leq n-2$, $\l(\lam^b)\leq 2$, and $\lam^t_{n-2}>l(\lam^b)$.
\end{dfn}

The \emph{diagram} $D_\lam$ of a shape is drawn by stacking the diagram $D^t_\lam$ of the first partition on top of the diagram $D^b_\lam$ of the second, with a horizontal line in between.  For example, the shape $((4,2)//(3))$ has diagram:
\begin{center}
\includegraphics[scale=0.75]{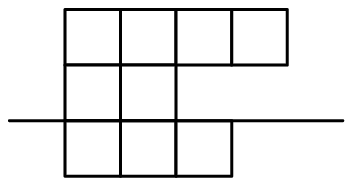}
\end{center}

The inequalities in the definition of a shape $\lam$ amount to imposing three requirements on its diagram $D_\lam$:
\begin{enumerate}
\item $D^t_\lam$ must fit into a rectangle with height $n-2$ and width $n$
\item $D^b_\lam$ must fit into a rectangle with height $2$ and width $n$
\item $D_\lam$ must contain the ``triangle'' of boxes lying on and above the diagonal beginning at the box in the diagram's lower-left corner and proceeding to the ``northeast''.
\end{enumerate}
We display once again the diagram of the shape $((4,2)//(3))$, this time shading in the triangle of boxes required by the third condition above:
\begin{center}
\includegraphics[scale=0.75]{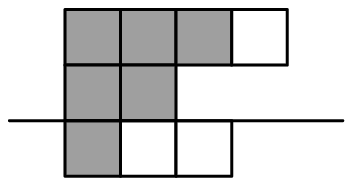}
\end{center}

The \emph{weight} of a shape $\lam=(\lam^t//\lam^b)$ is defined in terms of the weights of the partitions $\lam^t$, $\lam^b$ as $$\abs{\lam}:=\abs{\lam^t}+\abs{\lam^b}-{n-1 \choose 2}.$$
This definition is chosen so that the shape $\pi_0:=((n-2,n-3,\ldots,1)//\emptyset)$ of minimal weight will have weight $0$.

We refer to \cite{PR96} for the definitions of extremal and related components, $(\mu-\lam)$-boxes, and compatible shapes.

The set of all shapes, denoted $\Pt$, can be mapped bijectively onto bases for each of $\CH(X_B)$ and $\CH(X_C)$.  We name the maps $$\sig:\Pt \lra \CH(X_B)$$ $$\tau:\Pt \lra \CH(X_C)$$ and we call cycles in the images of the maps {\em basic} cycles.

For $i=0,1,2$, consider the shapes $\pi_i:= ((n-2+i, n-3, n-4, \ldots, 1)//\emptyset)$.  When $n=5$, their diagrams are as follows:
\begin{center}
\includegraphics[scale=0.75]{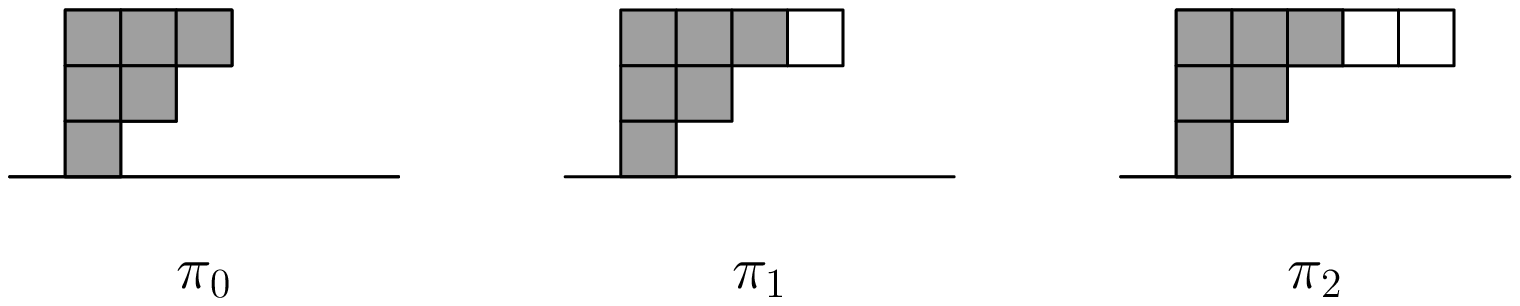}
\end{center}
If we set $\sig_i:=\sig(\pi_i)$, $\tau_i:=\tau(\pi_i)$, then $\sig_0, \tau_0$ are the respective multiplicative identities of the Chow rings, while $\sig_1,\sig_2$ (resp. $\tau_1,\tau_2$), called {\em special} cycles, are the nontrivial Chern classes of the tautological bundle over $X_B$ (resp. $X_C$).  Pragacz and Ratajski prove that $\tau_1,\tau_2$ algebraically generate $\CH(X_C)$ (Cor. 1.8 and Lem. 3.2), while $\sig_1,\sig_2$ only generate $\CH(X_B)$ after tensoring with $\Z[1/2]$ (Thm. 10.1).

With the weight $|\lam|$ of a shape defined as above, we have $\sig(\lam)\in\CH^{\abs{\lam}}(X_B)$ and $\tau(\lam)\in\CH^{\abs{\lam}}(X_C)$.  In particular, $\codim(\sig_i)=\abs{\pi_i}=i$ and $\codim(\tau_i)=\abs{\pi_i}=i$.

The multiplication rules in $\CH(X_B)$ and $\CH(X_C)$ are very similar, differing only by some factors of $2$ in certain multiplicities.  Indeed, for any shape $\lam\in\Pt$, $i=1,2$, we have the Pieri-type formulas $$\sig(\lam)\cdot\sig_i = \sum 2^{e_B(\lam,\mu)}\sig(\mu)$$ $$\tau(\lam)\cdot\tau_i = \sum 2^{e_C(\lam,\mu)}\tau(\mu)$$ for multiplying a basic cycle by a special cycle \cite[Thms. 2.2 and 10.1]{PR96}.  Here, the sums are over all $\mu$ compatible with $\lambda$ satisfying $\abs{\mu}=\abs{\lam}+i$, and $e_B(\lam,\mu), e_C(\lam,\mu)$ are the cardinalities of certain sets of components of the skew diagram $D_\mu^b\bs D_\lam^b$.  For our purposes, what we need is that for compatible $\lam,\mu$ with $\abs{\mu}=\abs{\lam}+i$, the difference $e_B(\lam,\mu)-e_C(\lam,\mu)$ equals the number of {\em extremal} components of $D_\mu^b\bs D_\lam^b$.  (This follows from the fact that an extremal component is not related and has no $(\mu-\lam)$-boxes lying over it, by parts 2 and 4 of the definition \cite[Def. 2.1]{PR96} of compatible shapes.)  The skew diagram $D_\mu^b\bs D_\lam^b$ clearly has at most one extremal component.  There is exactly one extremal component if and only if $l(\mu^b)>l(\lam^b)$, which by part 5 of the definition of compatible shapes is equivalent to $l(\mu^b) = l(\lam^b)+1$.  There are no extremal components if and only if $l(\mu^b) = l(\lam^b)$.  Putting all of this together, we conclude that
\begin{equation}\label{mult}
e_B(\lam,\mu)-e_C(\lam,\mu) = l(\mu^b)-l(\lam^b)\in\{0,1\}.
\end{equation}

To describe the product of several special cycles, we need to extend the notion of compatibility to sequences of shapes.  For nonnegative integers $a_1,a_2$, define a {\em compatible $(a_1,a_2)$-chain} to be a sequence of shapes $$\Lam = (\pi_0\!=\!\lam_0, \lam_1, \ldots, \lam_{a_1+a_2})$$ such that for $i=1,2,\ldots,(a_1\!+\!a_2)$, the shapes $\lam_i$ and $\lam_{i-1}$ are compatible, $\abs{\lam_i}-\abs{\lam_{i-1}}\in\{1,2\}$, and $\abs{\lam_{a_1+a_2}}=a_1+2a_2$.

We now can write down formulas for an arbitrary product of special cycles in $\CH(X_B)$ or $\CH(X_C)$.  In $\CH(X_B)$, the formula is $$\sig_1^{a_1}\cdot\sig_2^{a_2} = \sum_{\subs{\text{\tiny compatible $(a_1,a_2)$-chains}}{\Lam=(\lam_0, \lam_1, \ldots, \lam_{a_1+a_2})}}2^{b_\Lam}\sig(\lam_{a_1+a_2}),$$ where $$b_\Lam = e_{B}(\lam_0,\lam_1)+e_{B}(\lam_1,\lam_2)+\cdots+e_{B}(\lam_{a_1+a_2-1},\lam_{a_1+a_2}),$$ and in $\CH(X_C)$, the formula is $$\tau_1^{a_1}\cdot\tau_2^{a_2} = \sum_{\subs{\text{\tiny compatible $(a_1,a_2)$-chains}}{\Lam=(\lam_0, \lam_1, \ldots, \lam_{a_1+a_2})}}2^{c_\Lam}\tau(\lam_{a_1+a_2}),$$ where $$c_\Lam = e_{C}(\lam_0,\lam_1)+e_{C}(\lam_1,\lam_2)+\cdots+e_{C}(\lam_{a_1+a_2-1},\lam_{a_1+a_2}).$$
It follows from equation (\ref{mult}) that
\begin{eqnarray}\label{teles} b_\Lam-c_\Lam & = & [l(\lam_1^b)-l(\lam_0^b)]+[l(\lam_2^b)-l(\lam_1^b)]+\cdots+[l(\lam_{a_1+a_2}^b)-l(\lam_{a_1+a_2-1}^b)] \nonumber \\
& = & l(\lam^b_{a_1+a_2})-l(\emptyset)\\
& = & l(\lam^b_{a_1+a_2}). \nonumber
\end{eqnarray}

We now are ready to prove the lemma.

\begin{proof}
It is enough to take $\gam$ to be a basic cycle, say $\gam=\sig(\lam)$ for some shape $\lam$ of weight $r$.  Since $\tau_1$ and $\tau_2$ generate the ring $\CH(X_C)$, there exist integers $u_j$ such that
$$\tau(\lam) = \sum_{j=0}^{\lfloor r/2\rfloor} u_j \left(\tau_1^{r-2j}\cdot\tau_2^{j}\right)\in\CH^r(X_C).$$
Define $$\gam':= \sum_{j=0}^{\lfloor r/2\rfloor} u_j \left(\sig_1^{r-2j}\cdot\sig_2^{j}\right)\in\CH^r(X_B).$$  This is a rational cycle, since $\sig_1,\sig_2$ are Chern classes of the tautological bundle over $X_B=\bar{X}_2$ and are therefore defined over the base field $F$.

It remains to show that $\gam'=2\gam$.  The key is that for any shape $\lam=(\lam^t//\lam^b)$ of weight $2n-3$ or $2n-2$, $l(\lam^b)=1$.  Indeed, it follows easily from the conditions imposed in the definition of a shape $\lam$ that $l(\lam^b)=0$ implies $\abs{\lam}\leq 2n-4$, and $l(\lam^b)=2$ (the greatest value possible) implies $\abs{\lam}\geq 2n-1$.  The case $n=5$ is illustrated below, where we shade the ``$\pi_0$ boxes'' which don't contribute to the weight $|\lam|$.  Diagram I corresponds to the shape of maximal weight $2n-4=6$ among shapes $\lam$ with $l(\lam^b)=0$.  Diagram II corresponds to the shape of minimal weight $2n-1=9$ among shapes $\lam$ with $l(\lam^b)=2$.
\begin{center}
\includegraphics[scale=0.75]{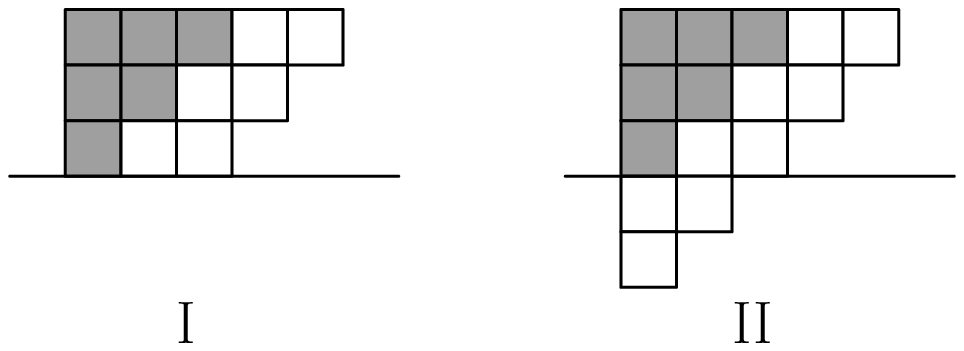}
\end{center}

Expanding the products in the expression for $\tau(\lam)$, we get
$$\tau(\lam) = \sum_{j=0}^{\lfloor r/2\rfloor} u_j \sum_{\subs{\text{\tiny compatible $(r\!-\!2j,j)$-chains}}{\Lam=(\lam_0, \lam_1, \ldots, \lam_{r-j})}}2^{c_\Lam}\tau(\lam_{r-j}).$$
The maps $\sig$ and $\tau$ induce a group isomorphism $\mbox{``}\sig\circ\tau^{-1}\mbox{''}:\CH(X_C)\ra\CH(X_B)$ which when applied to the equation above yields
\begin{equation}\label{tau}\sig(\lam) = \sum_{j=0}^{\lfloor r/2\rfloor} u_j \sum_{\subs{\text{\tiny compatible $(r\!-\!2j,j)$-chains}}{\Lam=(\lam_0, \lam_1, \ldots, \lam_{r-j})}}2^{c_\Lam}\sig(\lam_{r-j}).
\end{equation}

On the other hand, expanding the expression for $\gam'$ yields
\begin{equation}\label{gamp}\gam' = \sum_{j=0}^{\lfloor r/2\rfloor} u_j \sum_{\subs{\text{\tiny compatible $(r\!-\!2j,j)$-chains}}{\Lam=(\lam_0, \lam_1, \ldots, \lam_{r-j})}}2^{b_\Lam}\sig(\lam_{r-j}),
\end{equation}
which differs from the expression for $\sig(\lam)$ only in that the exponent $c_\Lam$ has been changed to $b_\Lam$.  By equation (\ref{teles}) and our length computation above, $b_\Lam-c_\Lam=l(\lam^b_{r-j})=1$ for any compatible $(r\!-\!2j,j)$-chain $\Lam$, since $$\abs{\lam_{r-j}}=(r-2j)+2j=r\in\{2n-3,2n-2\}.$$  Thus each term on the right-hand side of (\ref{gamp}) is twice the corresponding term on the right-hand side of (\ref{tau}) and $\gam'=2\sig(\lam)=2\gam$.
\end{proof}

\section{Proof of $2$-incompressibility}

We now have the ingredients necessary for a proof of the main theorem, whose statement we repeat below.

\begin{theorem}
If $\deg\CH(X_2)=4\Z$ and $\ki_2(\phi)=1$, then $X_2$ is $2$-in\-com\-pres\-sible.  In particular, $$\cdim_2(X_2)=\cdim(X_2)=\dim(X_2)=4n-5.$$
\end{theorem}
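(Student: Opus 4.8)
Since $\cdim_2(X_2)\le\cdim(X_2)\le\dim(X_2)=4n-5$ holds for trivial reasons, the entire content of the theorem is the reverse inequality, i.e. the $2$-incompressibility of $X_2$; once that is in hand the displayed chain of equalities follows at once. The plan is to reduce $2$-incompressibility to a statement about rational cycles on $\bar X_2$ and then extract a contradiction from the degree hypothesis. I would begin by collecting the splitting data already prepared in the previous section: the hypothesis $\deg\CH(X_2)=4\Z$ gives $\kj_1(\phi)=1$ (Proposition \ref{witt1}), so $\kj_2(\phi)=\kj_1(\phi)+\ki_2(\phi)=2$, and then Proposition \ref{witt2} yields $\ki_0(\phi_{F(X_2)})=2$. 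The force of this last equality is that over the generic splitting field $F(X_2)$ the Witt index jumps by exactly the amount forced by the appearance of an isotropic $2$-plane and no more; there is no accidental further splitting, so $F(X_2)$ behaves as a genuine generic splitting field for $X_2$ and the upper motive of $X_2$ is ``new'' at this level.

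Next I would invoke the Chernousov--Merkurjev method of motivic decomposition. The variety $X_2$ is projective homogeneous, hence geometrically cellular and subject to Rost nilpotence, so its Chow motive with $\F_2$-coefficients decomposes into indecomposable summands, uniquely up to isomorphism and reordering (Krull--Schmidt). Over the splitting field $E$ this motive is the sum of Tate motives $\F_2(\abs{\lam})$ indexed by the shapes $\lam\in\Pt$, the twist being the weight, matching the basis $\{\sig(\lam)\}$ of $\CH(\bar X_2)$. By Karpenko's identification of canonical $p$-dimension with the dimension of the upper motive, $2$-incompressibility is equivalent to the assertion that the top Tate motive $\F_2(4n-5)$---the summand carrying the class of a closed point---lies in the same indecomposable summand $U$ as the unit $\F_2(0)$; equivalently, by Poincar\'e duality for $M(X_2)$, that $U$ is self-dual. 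Poincar\'e duality pairs codimension $2n-3$ with codimension $2n-2$, and indeed $(2n-3)+(2n-2)=4n-5=\dim X_2$; this is precisely why those two codimensions, the ones straddling the middle, are the right place to test self-duality, and it is exactly the range covered by Lemma \ref{tech}.

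The crux is to establish that $U$ reaches the top, and here I would argue by contradiction from the degree hypothesis. Were $X_2$ $2$-compressible, the upper motive would stop short of $\F_2(4n-5)$, which is detected by a rational correspondence $\rho\in\CH_{4n-5}(X_2\times X_2)$ of odd multiplicity whose support projects to a proper closed subset of the second factor (the graph-type cycle of a degree-prime-to-$2$ compression). Base-changing to $E$ and expanding $\bar\rho$ in the K\"unneth basis $\{\sig(\lam)\times\sig(\mu)\}$, I would pair it against the degree-$2$ rational cycles supplied by Lemma \ref{tech} in the dual codimensions $2n-3$ and $2n-2$. The bookkeeping of the previous section---in particular the multiplicity identities \eqref{mult} and \eqref{teles}, which record that products of basic cycles in $\CH(X_B)$ differ from those in $\CH(X_C)$ only by controlled powers of $2$---is what makes this pairing computable: combining the odd multiplicity of $\rho$ with the single factor of $2$ coming from Lemma \ref{tech} should produce a rational zero-cycle whose degree has $2$-adic valuation exactly one, i.e. a degree $\equiv 2\pmod 4$. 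This contradicts $\deg\CH(X_2)=4\Z$, so no compressing $\rho$ exists and $X_2$ is $2$-incompressible.

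I expect the main obstacle to be exactly this last matching. The hypothetical compression $\rho$ is an abstract motivic projector, while $\CH(\bar X_2)$ is the rather rigid, explicitly described ring of Pragacz and Ratajski; forcing these two descriptions to interact in the two critical codimensions, and tracking every factor of $2$ honestly across the duality pairing so that the zero-cycle produced really has valuation one rather than two, is where the genuine work lies. Keeping the multiplicities exact---neither losing nor gaining a stray power of $2$ between $\CH(X_B)$ and $\CH(X_C)$---is the delicate point on which the whole argument turns.
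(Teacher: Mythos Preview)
Your outline has the right ingredients but a genuine structural gap: the contradiction you sketch only accounts for part of a general correspondence $\rho\in\CH_{4n-5}(X_2\times X_2)$, and precisely the part it misses is where the hypothesis $\ki_2(\phi)=1$ actually does its work.

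The paper does not argue via the upper motive of $X_2$ alone; it applies Chernousov--Merkurjev to $M(X_2\times X_2)$ and obtains an explicit splitting
\[
\CH_{4n-5}(X_2\times X_2)\;\simeq\;\CH_{4n-5}(X_2)\oplus\CH_{2n-2}(X_{1,2})\oplus\CH_0(X_2)\oplus\CH_{4n-6}(X_{1,2,3})\oplus\CH_{2n-3}(X_{1,2,3})\oplus\bigl[\CH_{4n-9}(X_{2,4})\bigr],
\]
and checks $\mult(\delta)\equiv\mult(\delta^t)\pmod 2$ summand by summand. Your pairing-with-Lemma~\ref{tech} idea is exactly how the $\CH_{2n-2}(X_{1,2})\simeq\CH_{2n-2}(X_2)\oplus\CH_{2n-3}(X_2)$ summand is handled, and your ``degree $\equiv 2\pmod 4$'' contradiction is correct there. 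The diagonal and $\CH_0$ summands are easy. But three summands remain---those supported on the deeper flag varieties $X_{1,2,3}$ and $X_{2,4}$---and nothing in your argument touches them. It is here that $\ki_0(\phi_{F(X_2)})=2$ (hence $\ki_2(\phi)=1$) is used: over $F(X_2)$ those flag varieties still require an isotropic subspace of dimension $\geq 3$, so any closed point has even degree by Springer's theorem, forcing both multiplicities to be even. You record this Witt-index fact at the outset but never deploy it; the phrase ``the upper motive of $X_2$ is `new' at this level'' is not a substitute for that step.

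Relatedly, the heuristic that Poincar\'e duality singles out codimensions $2n-3$ and $2n-2$ as ``the right place to test self-duality'' is misleading: a compressing correspondence has K\"unneth components in every bidegree summing to $4n-5$, and self-duality of the upper motive is a statement about all of them. The explicit $W_P\backslash W/W_P$ decomposition is what organizes those components into a finite list of cases with identifiable geometric sources, making the verification finite and tractable.
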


We briefly recall some terminology from \cite[\S 62 and \S 75]{EKM08}. Let $X$ and $Y$ be schemes with $\dim X=e$.  A \emph{correspondence of degree zero $\de:X\lt Y$ from $X$ to $Y$} is just a cycle $\de\in\CH_e(X\times Y)$.  The \emph{multiplicity} $\mult(\de)$ of such a $\de$ is the integer satisfying $\mult(\de)\cdot [X] = p_*(\de)$, where $p_*$ is the push-forward homomorphism
$$p_*:\CH_e(X\times Y)\ra \CH_e(X) = \mathbb{Z}\cdot [X].$$  The exchange isomorphism $X\times Y\ra Y\times X$ induces an isomorphism
$$\CH_e(X\times Y)\ra \CH_e(Y\times X)$$
sending a cycle $\de$ to its \emph{transpose} $\de^t$.

\begin{proof}
To prove that a variety $X$ is $2$-incompressible, it suffices to show that for any correspondence $\de:X\lt X$ of degree zero,
\begin{equation}\label{mod}
\mult(\de)\equiv\mult(\de^t) \pmod{2}.
\end{equation}
Indeed, suppose we have $f:X'\ra X$ and a dominant $g:X'\ra X$ with $F(X')/F(X)$ finite of odd degree.  Let $\de\in\CH(X\times X)$ be the pushforward of the class $[X']$ along the induced morphism $(g,f):X'\ra X\times X$.  By assumption, $\mult(\de)$ is odd, so by (\ref{mod}) we have that $\mult(\de^t)$ is odd.  It follows that $f_*([X'])$ is an odd multiple of $[X]$ and in particular is nonzero, so $f$ is dominant.

We will check that the condition (\ref{mod}) holds for the variety $X_2$.  A correspondence of degree zero $\de:X_2\lt X_2$ is just an element of $\CH_{4n-5}(X_2\times X_2)$.  Using the method of Chernousov and Merkurjev described in \cite{CM06}, we can decompose the motive of $X_2\times X_2$ as follows.  See also \cite{CSM05} for examples of similar computations.

We first realize $X_2$ as a projective homogeneous variety.  Let $G$ denote the special orthogonal group corresponding to the quadratic form $\phi$ on $V$.  Let $\Pi$ be a set of simple roots for the root system $\Sigma$ of $G$.  If $e_1,\ldots,e_n$ are the standard basis vectors of $\mathbb{R}^n$, we may take $$\Pi=\{\al_1\!:=\!e_1\!-\!e_2,\ldots,\al_{n-1}\!:= \!e_{n-1}\!-\!e_n,\al_n\!:=\!e_n\}.$$  Then $X_2$ is a projective $G$-homogeneous variety, namely the variety of all parabolic subgroups of $G$ of type $S$, for the subset $S=\Pi\backslash\{\al_2\}$ of the set of simple roots.

Let $W$ denote the Weyl group of the root system $\Sigma$.  When $n\geq 4$, there are six double cosets $D\in W_P\backslash W/W_P$ with representatives $w$ listed in the first column below (where we write $\al_k$ when we mean the reflection $w_{\al_k}$).  The second column lists the effect of $w^{-1}$ on the first four $e_i$ (the rest not being affected).  The third column gives the subset of $\Pi$ associated to $w$.  When $n=3$, there are only five double cosets.  In this case, the table may be amended by deleting the final row and removing all mention of $e_4$ from the remaining rows.
\[
\begin{array}{|c|c|l|} \hline
\rule[-2.5mm]{0mm}{8mm} \boldsymbol{w}\text{ ($\al_k$ here means $w_{\al_k}$)} & \boldsymbol{w^{-1}(e_1,e_2,e_3,e_4)} & \qquad\boldsymbol{R_D} \\ \hline
\rule[0mm]{0mm}{4mm} 1 & (e_1,e_2,e_3,e_4) & \Pi\backslash \{\al_2\} \\
\al_2\cdots\al_n\cdots\al_2 & (e_1,-e_2,e_3,e_4) & \Pi\backslash \{\al_1,\al_2\} \\
(\al_2\cdots\al_n\cdots\al_2)\al_1(\al_2\cdots\al_n\cdots\al_2) & (-e_2,-e_1,e_3,e_4) & \Pi\backslash \{\al_2\} \\
\al_1 & (e_1,e_3,e_2,e_4) & \Pi\backslash \{\al_1,\al_2,\al_3\} \\
\al_2\al_1(\al_3\cdots\al_n\cdots\al_2) & (e_3,-e_2,e_1,e_4) & \Pi\backslash \{\al_1,\al_2,\al_3\} \\
\rule[-2mm]{0mm}{0mm}(\al_2\al_1)(\al_3\al_2) & (e_3,e_4,e_1,e_2) & \Pi\backslash \{\al_2,\al_4\} \\ \hline
\end{array}
\]

From \cite[Thm. 6.3]{CM06}, we deduce the following decomposition of the motive of $X_2\times X_2$, where the last summand is removed for the case $n=3$.  We denote by $X_{d_1,d_2,\ldots,d_s}$ the variety of flags of totally isotropic subspaces of $V$ of dimensions $d_1,d_2,\ldots,d_s$.

\[
\begin{split}
M(X_2\times X_2) \simeq & \;M(X_2) \ds M(X_{1,2})(2n-3) \ds M(X_2)(4n-5) \\
  &\ds M(X_{1,2,3})(1) \ds M(X_{1,2,3})(2n-2) \ds \Big[M(X_{2,4})(4)\Big]
\end{split}
\]

This in turn yields a decomposition of the middle-dimensional component of the Chow group of $X_2\times X_2$.

\[
\begin{split}
\CH_{4n-5}(X_2\times X_2) \simeq & \,\CH_{4n-5}(X_2) \ds \CH_{2n-2}(X_{1,2}) \ds \CH_0(X_2) \\
 & \ds \CH_{4n-6}(X_{1,2,3}) \ds \CH_{2n-3}(X_{1,2,3}) \ds \Big[\CH_{4n-9}(X_{2,4})\Big]
\end{split}
\]

It now suffices to check the congruence $\mult(\de)\equiv\mult(\de^t) \pmod{2}$ for $\de$ in the image of any of these summands.  The embedding of the first summand $\CH_{4n-5}(X_2)$ is induced by the diagonal morphism $X_2\ra X_2\times X_2$, so the multiplicities are equal by symmetry.

Any element $\de$ of the third summand $\CH_0(X_2)$ has degree divisible by $4$ by assumption, hence its image in the Chow group $\CH_0(\bar{X}_2)$ is divisible by $4$.  (Here we use that $\CH_0(\bar{X}_2)$ is generated by a single element of degree $1$.)  The image of $\de$ in $\CH_{4n-5}(\bar{X}_2\times \bar{X}_2)$ is then also divisible by $4$, and since multiplicity does not change under field extension, $\mult(\de)\equiv 0\equiv \mult(\de^t) \pmod{4}$.

The second summand requires our work from the previous section.  Since $X_{1,2}$ is a projective bundle over $X_2$, there is a motivic decomposition $M(X_{1,2})\simeq M(X_2)\ds M(X_2)(1)$, so that $$\CH_{2n-2}(X_{1,2})\simeq \CH_{2n-2}(X_2)\ds \CH_{2n-3}(X_2).$$  It is enough to consider $\de$ equal to the image of some $\beta\in\CH_r(X_2)$, where $r\in\{2n-3,2n-2\}$.  By the same reasoning as in the previous paragraph, it suffices to show that the image of $\beta$ in $\ol{\CH}_r(X_2)\subset\CH_r(\bar{X}_2)$ is divisible by $2$ in $\CH_r(\bar{X}_2)$.  Suppose it is not.  Then the image $\hat{\beta}$ of $\beta$ in the modulo-$2$ Chow group $$\Ch_r(\bar{X}_2):=\CH_r(\bar{X}_2)/2\CH_r(\bar{X}_2)$$ is nonzero.  By \cite[Rem. 5.6]{KM06}, the ``cellular'' variety $\bar{X}_2$ is ``$2$-balanced,'' i.e. the bilinear form $(\hat{\beta},\hat{\gamma})\mapsto \deg (\hat{\beta}\cdot\hat{\gamma})$ on $\Ch(\bar{X}_2)$ is nondegenerate.  Hence there exists $\gamma\in\CH^r(\bar{X}_2)$ such that $$\deg(\beta\cdot\gamma)\equiv 1\pmod{2}.$$  Since $2\gamma$ is rational by our Lemma \ref{tech}, we have $$\deg\ol{\CH}_0(X_2) \ni \deg(\beta\cdot 2\gamma)\equiv 2\pmod{4}.$$  Degree does not change under field extension, so this contradicts our assumption that $\deg\CH(X_2)=4\Z$.

The last three summands of the decomposition are dealt with by the following proposition, whose proof uses our results on higher Witt indices.  This will complete the proof of the theorem.
\end{proof}

\begin{proposition}
Let $Fl:=X_{d_1,d_2,\ldots,d_s}$ be a variety of totally isotropic flags with $d_s>2$ and let the correspondence $\al:Fl\lt X_2\times X_2$ induce an embedding $$\al_*:\CH_r(Fl)\hookrightarrow \CH_{4n-5}(X_2\times X_2).$$  Then for any $\delta$ in the image of $\al_*$, $\mult(\de)\equiv 0\equiv \mult(\de^t) \pmod{2}$.
\end{proposition}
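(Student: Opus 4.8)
The plan is to prove the stronger statement that both $\mult(\de)$ and $\mult(\de^t)$ are \emph{even}; this certainly implies the congruence (\ref{mod}) needed to finish the theorem. Write $\de=\al_*(\beta)$ for some $\beta\in\CH_r(Fl)$ and set $K:=F(X_2)$. The first step is to reinterpret multiplicity as the degree of a zero-cycle. Restricting $\de$ over the generic point of the first factor of $X_2\times X_2$ yields a zero-cycle $\de_K\in\CH_0\((X_2)_K\)$ on the base-changed second factor, and a direct check on product cycles (the standard generic-fiber description of multiplicity, cf.\ \cite[\S 62, \S 75]{EKM08}) gives $\mult(\de)=\deg(\de_K)$. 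Symmetrically, $\mult(\de^t)$ is the degree of the zero-cycle obtained by restricting over the generic point of the second factor. Note that $(X_2)_K$ has a rational point (the tautological isotropic $2$-plane over its function field), so this reduction alone cannot force evenness; the key is that $\de$ factors through $Fl$.

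Next I would exploit that factorization. Writing $T:=Fl\times X_2\times X_2$ with projections $q_F:T\ra Fl$ and $q_{12}:T\ra X_2\times X_2$, we have $\de=(q_{12})_*(q_F^*\beta\cdot\al)$. A dimension count shows that, after restricting over the generic point $\eta$ of the first factor, the cycle $\zeta:=(q_F^*\beta\cdot\al)|_{T_\eta}$ already lies in $\CH_0\(Fl_K\times (X_2)_K\)$, and that $\de_K$ is its pushforward to $(X_2)_K$; hence $\deg(\de_K)=\deg\zeta$. Pushing $\zeta$ forward to $Fl_K$ instead then exhibits $\mult(\de)=\deg(\de_K)=\deg\zeta$ as the degree of a zero-cycle \emph{on $Fl_K$}. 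The same reasoning, using the transpose correspondence (which still factors through $Fl$), shows $\mult(\de^t)$ is likewise the degree of a zero-cycle on $Fl_K$. Observe that this step uses nothing about $\al_*$ beyond the bare factorization through $Fl$, not its injectivity.

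Finally I would invoke the Witt-index results of Section 3. By Proposition \ref{witt2} we have $\ki_0(\phi_K)=2$. Any closed point of $Fl_K$ has residue field $L$ finite over $K$, and since $Fl=X_{d_1,\ldots,d_s}$ with $d_s>2$, such a point produces a totally isotropic subspace of dimension $d_s\geq 3$ over $L$, so $\ki_0(\phi_L)\geq 3$. Were $[L:K]$ odd, Springer's theorem on odd-degree extensions (\cite[\S 18]{EKM08}) would give $\ki_0(\phi_L)=\ki_0(\phi_K)=2$, a contradiction. Hence every closed point of $Fl_K$, and therefore every zero-cycle on $Fl_K$, has even degree. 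Combined with the previous step, this forces $\mult(\de)$ and $\mult(\de^t)$ to be even, which completes the proof.

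I expect the main obstacle to be the bookkeeping of the second paragraph: verifying by dimension count that the restricted cycle $\zeta$ is genuinely zero-dimensional on $Fl_K\times(X_2)_K$ and that its degree equals the multiplicity, so that the computation can be transported from the ``uninformative'' factor $(X_2)_K$ to the ``informative'' factor $Fl_K$. Once this transport is in place, the conceptual heart---that $d_s>2$ together with $\ki_0(\phi_{F(X_2)})=2$ rules out odd-degree points on $Fl_K$ by Springer's theorem---is immediate.
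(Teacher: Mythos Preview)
Your proposal is correct and follows essentially the same route as the paper: both arguments pass to the generic fiber over $K=F(X_2)$, exhibit $\mult(\de)$ and $\mult(\de^t)$ as degrees of zero-cycles on $Fl_K$ (the paper packages your ``transport'' step into a commutative diagram of push-forwards and pull-backs), and then invoke Propositions~\ref{witt1}--\ref{witt2} plus Springer's theorem to conclude that $\deg\CH_0(Fl_K)\subset 2\Z$ because $d_s>2$ forces any closed point to raise the Witt index above $2$.
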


\begin{proof}
Consider the diagram below of fiber products, where we select either of the projections $p_i$ and choose the other morphisms accordingly.

\[
\xymatrix@!C{
                                        & (Fl)_{F(X_2)}  \ar[dr]                       &   \\
(Fl\times X_2)_{F(X_2)} \ar[r] \ar[d] \ar[ur]  & (X_2)_{F(X_2)} \ar[r]  \ar[d]      & \Spec F(X_2) \ar[d]    \\
Fl\times X_2\times X_2 \ar[r] \ar[d]    & X_2\times X_2 \ar[r]_-{p_2}^-{p_1}    & X_2 \\
Fl
}
\]

Taking push-forwards and pull-backs, we get the following diagram which commutes except for the triangle at the bottom.  The push-forward by $p_i$ takes a cycle $\de\in\CH_{4n-5}(X_2\times X_2)$ to $\mult(\de)$ if we chose the first projection $p_1$ or to $\mult(\de^t)$ if we chose the second projection $p_2$.

\[
\xymatrix@!C{
      & \CH_0\left((Fl)_{F(X_2)}\right)  \ar[dr]+<-1.5ex,1ex>^-{\deg}                &   \\
\CH_0\left((Fl\times X_2)_{F(X_2)}\right) \ar[r] \ar[ur]   & \CH_0\left((X_2)_{F(X_2)}\right) \ar[r]_-{\deg}  & \Z \ar@{=}[d] \\
\CH_{4n-5}\left(Fl\times X_2\times X_2\right) \ar[r] \ar[u]    & \CH_{4n-5}\left(X_2\times X_2\right) \ar[r]_-{(\mult)\circ(\trans)}^-{\mult} \ar[u]    & \Z                     \\
\CH_r(Fl) \ar@/_1pc/_-{\al_*}@{.>}[ur] \ar[u]
}
\]

Any $\de\in \im(\al_*)$ also lies in the image of $\CH_{4n-5}\left(Fl\times X_2\times X_2\right)$, by the definition of the push-forward.  Chasing through the diagram, one sees that $\mult(\de)$ (and similarly $\mult(\de^t)$) must lie in $\deg \CH_0\left((Fl)_{F(X_2)}\right)$.  By Proposition \ref{witt1}, we know that $\kj_2(\phi)=2$.  Hence by Proposition \ref{witt2}, $\phi_{F(X_2)}\simeq 2\H\perp\psi$ for some anisotropic quadratic form $\psi$ over $F(X_2)$.  In order for the variety $Fl_{F(X_2)}$ to have a rational point over a field extension $K$ of $F(X_2)$, $\psi$ must be isotropic over $K$, due to the assumption $d_s>2$.  By Springer's Theorem, if the degree of the extension $K$ is finite then it must be divisible by 2, so $$\deg \CH_0\left((Fl)_{F(X_2)}\right)\subset 2\Z.$$
\end{proof}

This completes the proof of the theorem.

\end{document}